\newtheorem{obs}{Observation}[section]
\newtheorem{thm}[obs]{Theorem}
\newtheorem{exm}[obs]{Example}
\newtheorem{lemma}[obs]{Lemma}
\newtheorem{deff}[obs]{Definition}
\newtheorem{ques}[obs]{Question}
\newtheorem{preproof}{{\bf Proof.}}
\newenvironment{proof}[1]{\begin{preproof}{\rm
               #1}\hfill{$\rule{2mm}{2mm}$}}{\end{preproof}}
\def\newpic#1{}
\date{}
\begin{document}

\title{
{\Large{\bf Graphs with constant adjacency dimension}}}
%

{\small
\author{Mohsen Jannesari
\\
[1mm]
{\small \it  Faculty of Basic Sciences}\\
{\small \it  University of Shahreza} \\
{\small \it 86149-56841, Shahreza, Iran}\\
{\small \it Email: mjannesari@shahreza.ac.ir}
}}

 \maketitle \baselineskip15truept

\begin{abstract}
For a set $W$ of vertices and a
vertex $v$ in a  graph $G$, the $k$-vector
$r_2(v|W)=(a_G(v,w_1),\ldots,a_G(v,w_k))$ is   the
{\it adjacency representation} of $v$ with respect to $W$, where $W=\{w_1,\ldots,w_k\}$ and $a_G(x,y)$
is the minimum of $2$ and the distance between the vertices $x$ and $y$. The set $W$ is
 an {\it adjacency resolving set} for $G$ if distinct vertices of $G$ have
distinct adjacency representations with respect to $W$. The minimum
cardinality of an adjacency resolving set for $G$ is its {\it adjacency dimension}. It is clear that the adjacency dimension of an $n$-vertex graph $G$ is between $1$ and $n-1$. The graphs with adjacency dimension $1$ and $n-1$ are known.
All  graphs with adjacency dimension $2$, and all $n$-vertex graphs with adjacency dimension $n-2$ are studied in this paper.
In terms of the diameter and order of $G$, a sharp upper bound is found for adjacency dimension of $G$. Also, a sharp lower bound  for
adjacency dimension of $G$ is obtained
in terms of order of $G$. Using these two bounds, all graphs with adjacency dimension 2, and all $n$-vertex graphs with adjacency dimension $n-2$
are characterized.
\end{abstract}

{\bf Keywords:}  Resolving set; Metric
dimension; Metric basis; Adjacency dimension; Diameter.
\section{Introduction}
Throughout this
paper, $G$ is a finite simple graph
with vertex set $V(G)$, edge set $E(G)$, and order $n(G)$. We use $\overline G$
for the complement of  $G$. The distance between two
vertices $u$ and $v$, denoted by $d_G(u,v)$, is the length of a
shortest path joining $u$ and $v$ in $G$, we write this  simply as
$d(u,v)$   when no confusion can arise. The diameter of $G$ is ${\rm diam}(G)=\max\{d(u,v)| u,v\in V(G)\}$.
 $N(v)$ is the set of all neighbors of vertex $v$.
The edge between adjacent vertices $u$ and $v$, is shown by $uv$. 
We use $P_n$ and $C_n$ to denote the isomorphism classes of $n$-vertex paths and cycles, respectively.  We use $v_1,\ldots,v_n$ to denote specific $n$-vertex paths  with vertices $v_1,\ldots,v_n$.
\par For  $W=\{w_1,\ldots,w_k\}\subseteq V(G)$ and a
vertex $v$ of $G$, the  $k$-vector
$$r(v|W)=(d(v,w_1),\ldots,d(v,w_k))$$
is   the {\it metric representation}  of $v$ with
respect to $W$. The set $W$ is  a {\it resolving set} for
$G$ if the vertices of $G$ have distinct metric representations, with
respect to $W$.
A resolving set $W$ for $G$ with
minimum cardinality is  a {\it metric basis} of $G$, and its
cardinality is the {\it metric dimension} of $G$, denoted by
$\dim(G)$.  The concepts of resolving
sets and metric
dimension of a graph
 were introduced independently by Slater~\cite{Slater1975}
and by Harary and Melter~\cite{Harary}. For more results
related to these concepts
see~\cite{baily,trees,K dimensional,bounds,sur1,landmarks}.
\par
Let $G$ and $H$ be two graphs with disjoint vertex sets. The {\it join} of  $G$ and $H$, denoted by $G\vee H$, is the
graph with vertex set  $V(G)\cup V(H)$ and edge set $E(G)\cup
E(H)\cup\{uv|\, u\in V(G),v\in V(H)\}$. Also, the {\it disjoint union} of  $G$ and $H$, denoted by $G\cup H$, is the
graph with vertex set  $V(G)\cup V(H)$ and edge set $E(G)\cup
E(H)$.
In~\cite{Ollerman} all graphs of order $n$ with metric dimension
 $n-2$ are characterized as follows.
\begin{thm} {\rm\cite{Ollerman}}\label{nn-2}
Let $G$ be a  connected graph of order $n\geq 4$. Then $\dim(G)=n-2$ if and
only if $G=K_{s,t},s,t\geq 1$, $G=K_s\vee\overline K_t,s\geq 1,
t\geq 2$, or $G=K_s\vee (K_t\cup K_1),s,t\geq 1$.
\end{thm}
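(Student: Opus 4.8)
The plan is to prove the two implications separately; the ``only if'' part carries all the weight and splits into two reductions plus a structural classification.

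For the ``if'' direction I would, in each of the three families, locate the \emph{twin classes} --- maximal sets of vertices having the same neighbourhood apart from themselves. Since any resolving set must contain all but one vertex of each twin class, a graph with $m$ twin classes has $\dim\ge n-m$. In $K_{s,t}$ and $K_s\vee\overline K_t$ there are exactly two twin classes, so $\dim\ge n-2$, and the set consisting of all but one vertex of each part is resolving and of size $n-2$; that settles those two families. For $K_s\vee(K_t\cup K_1)$ there are (for $t\ge 2$) three twin classes, so the twin bound only gives $\dim\ge n-3$; here I would instead check directly that no $(n-3)$-set resolves: removing three vertices $x,y,z$ leaves two vertices of some common twin class unresolved unless $x,y,z$ sit in three distinct classes, and in that remaining case the $K_s$--$K_t$ pair among them is separated only by the (now deleted) vertex of $K_1$. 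An explicit resolving set of size $n-2$ --- all but one vertex of $K_s$, all but one of $K_t$, and the $K_1$-vertex --- then finishes it.

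For the ``only if'' direction, suppose $\dim(G)=n-2$ with $n\ge 4$. First I would show ${\rm diam}(G)\le 2$: along a diametral shortest path $v_0,v_1,\dots,v_d$ the vertex $v_0$ separates $v_1,\dots,v_d$, so $V(G)\setminus\{v_1,\dots,v_d\}$ is resolving and $\dim(G)\le n-{\rm diam}(G)$; combined with $\dim(K_n)=n-1$ this forces ${\rm diam}(G)=2$. In a graph of diameter $2$ a vertex $w\notin\{x,y\}$ separates $x$ and $y$ precisely when it is adjacent to exactly one of them, so ``no $(n-3)$-set resolves'' becomes the clean combinatorial condition $(\star)$: \emph{for every triple $\{x,y,z\}$ there is a pair $\{a,b\}$ among them with $N(a)\triangle N(b)\subseteq\{a,b,c\}$}, where $c$ is the third vertex. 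Next I would analyse the twin structure: the twin relation is an equivalence, each class of size $\ge 2$ is a clique or an independent set, and between any two classes $G$ is either complete bipartite or edgeless --- so $G$ is a clique/independent-set blow-up of a connected graph $R$ whose vertices are the classes. Condition $(\star)$ then forces first that at most two classes have size $\ge 2$ (a triple drawn from three such classes cannot satisfy $(\star)$, since the ``third vertex'' there would have to be a whole singleton class), and then that there are at most three classes in all. Finishing is a finite check: $R$ is $K_2$, $P_3$ or $K_3$; the blow-up types, constrained by $(\star)$ and by diameter $2$, yield exactly $K_{s,t}$ and $K_s\vee\overline K_t$ in the two-class case and $K_s\vee(K_t\cup K_1)$ in the three-class case ($R=K_3$ producing nothing new, since it collapses to graphs with fewer classes or to $K_n$).

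The heart of the difficulty is the step ``at most three twin classes''. With at most two non-trivial classes, four or more classes force at least two singleton classes, and I would derive a contradiction by feeding into $(\star)$ the triples built from two singletons together with representatives of two further classes: these demand a ``singleton separator'' property in $R$ for several pairs simultaneously, which cannot be met --- in spirit, $R$ restricted to four classes would have to be a twin-free connected graph of diameter $\le 2$ on four vertices, and there is none. Pinning this down rigorously, and then bookkeeping the boundary overlaps among the three families (e.g.\ $K_{1,t}=K_1\vee\overline K_t$ and $K_s\vee(K_1\cup K_1)=K_s\vee\overline K_2$), is where the real care is needed.
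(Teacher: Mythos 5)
First, note that the paper contains no proof of this statement: Theorem~\ref{nn-2} is quoted verbatim from the reference of Chartrand, Eroh, Johnson and Oellermann, so there is no in-paper argument to compare yours against and your proposal must stand on its own.

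Your ``if'' direction is complete and correct, and the opening reductions of the ``only if'' direction are sound: the bound $\dim(G)\le n-{\rm diam}(G)$ does force ${\rm diam}(G)=2$, the reformulation of ``no $(n-3)$-set resolves'' as the triple condition $(\star)$ is exactly right for diameter-$2$ graphs, and the twin argument showing that no three classes of size at least $2$ can coexist is correct. The genuine gap is the classification that follows. Your justification of ``at most three twin classes'' rests on the claim that the quotient $R$ restricted to four classes would have to be a twin-free connected diameter-$2$ graph on four vertices; but $R$ need not be twin-free --- already in $K_s\vee\overline K_t$ the two classes are adjacent twins of $R$ and are kept distinct only by their internal clique/independent types --- so that reduction does not apply, and the intended argument from triples built out of two singletons plus further representatives is left as a sketch that you yourself flag as unfinished. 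This step is where the theorem actually lives.

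The concluding ``finite check'' is also not routine. There are three-class, diameter-$2$ blow-ups that pass all of your filters (at most two non-singleton classes, connected quotient) and yet have metric dimension $n-3$, so they must be eliminated by a genuine appeal to $(\star)$: for example $K_1\vee(K_t\cup K_r)$ with $t,r\ge2$ (the bowtie for $t=r=2$), and $\overline K_s\vee(K_t\cup K_1)$ with $s,t\ge 2$, whose ``middle'' class is independent rather than complete. In each of these, omitting one vertex from each of the two outer classes together with a third suitable vertex leaves a resolving set of size $n-3$, so the correct conclusion of the three-class case --- that the quotient is $P_3$ with a complete centre class, exactly one outer class a singleton, and the other outer class a clique --- requires an argument you have not supplied. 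The strategy is viable and could be completed along these lines, but as written the ``only if'' direction is not yet a proof.
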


During the study of  the metric dimension of lexicographic product of graphs, Jannesari and Omoomi~\cite{lexico} introduced the concept of {\it adjacency dimension} of graphs.
\begin{deff}\label{Adjacency dimension}\rm\cite{lexico}
Let $G$ be a graph, and $W=\{w_1,\ldots,w_k\}\subseteq V(G)$.
For each vertex $v\in V(G)$, the \emph{adjacency
representation} of $v$ with respect to $W$ is the $k$-vector
$${r_2}(v|W)=(a_G(v,w_1),\ldots,a_G(v,w_k)),$$ where
$$a_G(v,w_i)=\left\{
\begin{array}{ll}
0 &  {\rm if}~v=w_i, \\
1 &  {\rm if}~v~{\rm is~adjacent~to~} w_i,\\
2 &   {\rm otherwise}.
\end{array}\right.$$
The set $W$ is an \emph{adjacency  resolving set} for $G$
if the vectors ${r_2}(v|W)$ for $v\in V(G)$ are distinct.
The minimum cardinality of an adjacency resolving set
is the \emph{adjacency dimension} of $G$, denoted by
$\dim_2(G)$. An adjacency resolving set of cardinality
$\dim_2(G)$ is  an \emph{adjacency basis} of $G$.
\end{deff}
We say that a set $W$ {\it  (adjacency) resolves} a set $T$ of vertices
in $G$, if the  adjacency representations of vertices in $T$ with respect to W are distinct.
  To determine whether a given set $W$ is  an adjacency resolving
set for $G$, it is sufficient to look at the  adjacency  representations of
vertices in $V(G)\backslash W$, because $w\in W$ is the unique
vertex of $G$ for which($a_{_G}(w,w)=0$) $d(w,w)=0$.
\par

 After the introducing of adjacency dimension, researchers interested in studying  this parameter, use it for finding metric dimension of some families of graphs and defined some related parameter to adjacency dimension.
Fernau and Rodriguez~\cite{fernando,fernau} use adjacency dimension to show that the metric dimension of the corona product of a graph of order $n$ and some nontrivial graph $H$ is equal to $n$ times the adjacency dimension of $H$. Using this relationship, they showed that the problem of computing the adjacency dimension is $NP$-hard. They also define a new related parameter, {\it local adjacency dimension} and use it to  show that the local metric dimension of the corona product of a graph of order $n$ and some nontrivial graph $H$ is equal to $n$ times the local adjacency dimension of $H$.
Estrada et al.~\cite{on adjacency dimension} introduced the concept of {\it $k$-adjacency dimension} and obtained some bounds and closed formulas for some families of graphs. In particular they obtained a closed formula for the $k$-adjacency dimension of join graphs.
\par By previous works, it is clear that each result about adjacency dimension is important for the study of metric dimension of lexicographic product graphs and corona product graphs.
In this paper, we find  sharp upper and lower bounds for adjacency dimension in terms of diameter and order of a graph.
  These bounds give us new useful bounds for metric dimension of lexicographic product graphs and corona product graphs.
\par It is clear that for each graph $G$, $1\leq\dim_2(G)\leq n(G)-1$. All graphs with adjacency dimension $1$ and all graphs with adjacency dimension $n(G)-1$ are characterized as the following lemma.
\begin{lemma}\label{chr=1,n-1}\rm\cite{lexico} Let $G$ be a graph of order $n$.
\begin{itemize}
\item $\dim_2(G)=n-1$ if and only if $G=K_n$ or $G=\overline{K}_n$.
\item $\dim_2(G)=1$ if and only if $G\in\{P_1,P_2,P_3,\overline{P}_2,\overline{P}_3\}$.
\end{itemize}
\end{lemma}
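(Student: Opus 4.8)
The plan is to translate each defining equality into a statement about neighbourhoods and then run a short case analysis; throughout write $N[x]=N(x)\cup\{x\}$. For $\dim_2(G)=n-1$, I would first record the elementary observation that \emph{every} set $W$ with $|W|=n-1$ is an adjacency resolving set: the unique vertex outside $W$ has no zero entry in its representation, each $w\in W$ carries a zero in its own coordinate, and two distinct members of $W$ are separated by either of their own coordinates. Hence $\dim_2(G)\le n-1$ always, so $\dim_2(G)=n-1$ precisely when no $(n-2)$-set adjacency resolves $G$. Fixing $W$ with $V(G)\setminus W=\{u,v\}$, the same zero-entry bookkeeping shows the only vertices that can share a representation with respect to $W$ are $u$ and $v$, and $r_2(u|W)=r_2(v|W)$ holds iff $u$ and $v$ have the same neighbours inside $W$, i.e.\ $N(u)\setminus\{v\}=N(v)\setminus\{u\}$; equivalently, either $u\sim v$ and $N[u]=N[v]$, or $u\not\sim v$ and $N(u)=N(v)$. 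Therefore $\dim_2(G)=n-1$ if and only if every pair of vertices of $G$ satisfies one of these two alternatives.

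It then remains to identify such graphs. The converse is immediate: in $K_n$ any two vertices outside $W$ have representation $(1,\dots,1)$, and in $\overline K_n$ they have $(2,\dots,2)$, so both graphs need all $n-1$ vertices. For the forward direction the key point is that the two alternatives cannot coexist once they hold for every pair: if $u\sim v$ with $N[u]=N[v]$ while $v\not\sim w$ with $N(v)=N(w)$, then $u\in N(v)=N(w)$ forces $u\sim w$, hence $N[u]=N[w]$ and so $N[v]=N[w]$, contradicting $v\notin N[w]$. So either every pair is adjacent, giving $G=K_n$, or every pair is non-adjacent, giving $G=\overline K_n$.

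For $\dim_2(G)=1$, note that $\{w\}$ is an adjacency resolving set exactly when $v\mapsto a_G(v,w)$ is injective on $V(G)$, and since $a_G$ takes only the values $0,1,2$ this forces $n\le 3$. For $n\le 2$ a direct check gives $P_1,P_2,\overline P_2$; for $n=3$, injectivity means $w$ has exactly one neighbour and one non-neighbour, i.e.\ $\deg(w)=1$, and among the four graphs of order three this selects exactly $P_3$ and $\overline P_3$ (an endpoint of $P_3$, respectively of the $K_2$-component of $\overline P_3$, serving as $w$) while excluding $K_3$ and $\overline K_3$. This yields the list $\{P_1,P_2,P_3,\overline P_2,\overline P_3\}$.

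The only step that needs genuine care is the neighbourhood chase in the first part --- in particular the fact that deleting $n-2$ vertices merely forces a ``twin'' relation of one of two types, together with the short argument that a single graph cannot realize both types; everything else is routine bookkeeping with the $0/1/2$ entries of adjacency representations.
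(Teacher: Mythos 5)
Your proof is correct, but there is nothing in this paper to compare it against: Lemma~\ref{chr=1,n-1} is imported from the reference \cite{lexico} and stated without proof here, so your self-contained argument is a genuine addition rather than an alternative to an existing one. The key reduction is sound: since every vertex of an $(n-1)$-set $W$ is the unique vertex with a $0$ in its own coordinate, any such $W$ resolves $G$, and an $(n-2)$-set with complement $\{u,v\}$ fails to resolve exactly when $N(u)\setminus\{v\}=N(v)\setminus\{u\}$, i.e.\ when $u,v$ are twins in the sense the paper later uses in Lemma~\ref{twinadjacency}. Your observation that the ``closed-neighbourhood twin'' and ``open-neighbourhood twin'' relations cannot both occur in a graph where every pair is a twin pair is the one nontrivial step, and the chase $u\in N(v)=N(w)\Rightarrow u\sim w\Rightarrow N[u]=N[w]=N[v]\ni v$ contradicting $v\notin N(w)$ is valid; it correctly forces $K_n$ or $\overline K_n$. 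The second bullet is likewise handled correctly: injectivity of $v\mapsto a_G(v,w)$ into $\{0,1,2\}$ caps $n$ at $3$, and the degree-one criterion for $n=3$ isolates $P_3$ and $\overline P_3$ among the four three-vertex graphs. The only loose end is the degenerate case $n=1$, where $P_1$ appears in the $\dim_2=1$ list even though your (correct) bound $\dim_2(G)\le n-1$ would give $0$; this is a convention ambiguity already present in the lemma as stated (and in the paper's claim $1\le\dim_2(G)\le n-1$), not a defect of your argument, but it is worth flagging that you are implicitly adopting the convention $\dim_2(P_1)=1$.
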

In Section~\ref{n-2}, we find a new sharp upper bound for adjacency dimension in terms of the diameter  and order of a graph. Using this bound and characterization of graphs $G$ with metric dimension $n(G)-2$, a characterization of graphs $G$ with adjacency dimension $n(G)-2$ is obtained. In Section~\ref{b2=2}, a new sharp lower bound for adjacency dimension in terms of the order of a graph is presented. All graphs that attain this bound are characterized. This bound and some known results lead us to a characterization of graphs  with adjacency dimension $2$.
\par
The next
results about adjacency dimension  of graphs is needed in the following.
\begin{lemma}\label{adjacencyresults}\rm\cite{lexico} Let $G$ be a graph of order $n$.
\begin{itemize}

\item If  ${ diam}(G)=2$, then $\dim_2(G)=\dim(G)$.

\item If  $G$ is connected, then $\dim_2(G)\geq \dim(G)$.

\item $\dim_2(G)=\dim_2(\overline{G})$.
\item If $n\geq 4$, then $\dim_2(C_n)=\dim_2(P_n)=\lfloor{{2n+2}\over 5}\rfloor$.
\end{itemize}
\end{lemma}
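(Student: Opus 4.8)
I will verify the four assertions in turn; the first three are immediate from the definitions, and only the last one requires real work.

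For \emph{(i)}, if $\mathrm{diam}(G)=2$ then $d_G(x,y)\in\{0,1,2\}$ for all $x,y$, hence $a_G(x,y)=d_G(x,y)$ and $r_2(v|W)=r(v|W)$ for every vertex $v$ and every $W\subseteq V(G)$; so adjacency resolving sets and resolving sets coincide, and $\dim_2(G)=\dim(G)$. For \emph{(ii)}, if $G$ is connected and $r(u|W)=r(v|W)$, then applying $t\mapsto\min\{2,t\}$ to each coordinate gives $r_2(u|W)=r_2(v|W)$; thus every adjacency resolving set is a resolving set and $\dim_2(G)\ge\dim(G)$. For \emph{(iii)}, observe that for $u\neq v$ one has $a_G(u,v)=1$ exactly when $uv\in E(G)$, i.e.\ exactly when $a_{\overline G}(u,v)=2$, while $a_G(u,u)=a_{\overline G}(u,u)=0$; hence the permutation of $\{0,1,2\}$ that fixes $0$ and interchanges $1$ and $2$, applied coordinatewise, carries the adjacency representations computed in $G$ to those computed in $\overline G$. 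Being a bijection on vectors it preserves the property that all representations are distinct, so $\dim_2(G)=\dim_2(\overline G)$.

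For \emph{(iv)} I would start from the following reduction: if $v\notin W$ then $r_2(v|W)$ has entry $1$ in position $i$ precisely when $v$ is adjacent to $w_i$ and entry $2$ otherwise, so $r_2(v|W)$ is determined by the set $S(v):=N(v)\cap W$, and $W$ is an adjacency resolving set of $G$ if and only if the sets $S(v)$, $v\in V(G)\setminus W$, are pairwise distinct. In $P_n$ and $C_n$ every vertex has degree at most $2$, so $|S(v)|\le 2$. For the lower bound, let $W$ be an adjacency basis of $G\in\{P_n,C_n\}$, put $k=|W|$, and let $a,b,c$ be the numbers of vertices outside $W$ with $|S(v)|=0,1,2$ respectively. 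Only one vertex can have the all‑$2$ representation, so $a\le 1$; the nonempty sets $S(v)$ are distinct, so in particular $b\le k$ (distinct singletons); and counting the edges between $W$ and $V(G)\setminus W$ from the side of $W$ gives $b+2c=\sum_{v\notin W}|S(v)|\le 2k$, since each $w\in W$ has degree at most $2$. Then $b+c=\frac{1}{2}\bigl((b+2c)+b\bigr)\le\frac{1}{2}(2k+k)=\frac{3k}{2}$, so $n-k=a+b+c\le 1+\frac{3k}{2}$, which rearranges to $k\ge\frac{2(n-1)}{5}$; as $k$ is an integer this is exactly $k\ge\lceil\frac{2n-2}{5}\rceil=\lfloor\frac{2n+2}{5}\rfloor$.

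For the matching upper bound I would exhibit an adjacency resolving set of size $\lfloor(2n+2)/5\rfloor$. The template in a long path is the period‑$5$ pattern in which, among each five consecutive vertices, the second and fourth are placed in $W$; within such a block the three outside vertices receive the distinct sets $\{w\}$, $\{w,w'\}$, $\{w'\}$, and neighbouring blocks do not collide because two consecutive outside vertices belonging to adjacent blocks are dominated by different elements of $W$. One then modifies the tail of the path according to the residue of $n$ modulo $5$ (and, for $C_n$, shifts the pattern so that it closes up consistently), in each case arriving at a set of size exactly $\lfloor(2n+2)/5\rfloor$; comparing with the lower bound gives both the value of $\dim_2$ and the equality $\dim_2(P_n)=\dim_2(C_n)$. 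The main obstacle is precisely this boundary bookkeeping: for several residues of $n$ modulo $5$ the obvious completion of the periodic pattern produces two outside vertices with identical $S(v)$ — typically the two neighbours of a single element of $W$ both becoming adjacent to that element only — so the placement of the one or two extra vertices of $W$ near the end must be chosen with care, and the handful of end‑configurations for $P_n$ together with the wrap‑around configurations for $C_n$ must be checked individually. By contrast the counting lower bound is short and uniform; certifying that it is attained for every $n\ge 4$ is the only delicate part.
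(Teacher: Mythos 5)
The paper states this lemma as an imported result from \cite{lexico} and gives no proof of its own, so there is nothing internal to compare against; I evaluate your argument on its merits. Your proofs of the first three items are correct and complete: the identification $a_G=\min\{2,d_G\}$ with $d_G$ when ${\rm diam}(G)=2$, the observation that coordinatewise truncation sends equal metric representations to equal adjacency representations (so adjacency resolving sets are resolving sets in connected graphs), and the $0$-fixing transposition of $1$ and $2$ that carries representations in $G$ to representations in $\overline G$ are all exactly right. Your lower bound for item (iv) is also correct and nicely self-contained: reducing $r_2(v|W)$ for $v\notin W$ to the trace $S(v)=N(v)\cap W$, bounding the number of empty traces by $1$, singleton traces by $k$, and $\sum|S(v)|$ by $2k$ via the degree bound yields $n-k\le 1+\tfrac{3k}{2}$, hence $k\ge\lceil\tfrac{2n-2}{5}\rceil$, and the identity $\lceil\tfrac{2n-2}{5}\rceil=\lfloor\tfrac{2n+2}{5}\rfloor$ is applied correctly.

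The one genuine gap is the upper bound in item (iv). You describe the right periodic template (two of every five consecutive vertices, in positions $2$ and $4$ of each block) and you correctly diagnose that the danger is two outside vertices acquiring the same singleton trace near the boundary, but you never actually carry out the residue-by-residue completion for $P_n$, and for $C_n$ you give no concrete wrap-around configuration at all. Since the entire equality $\dim_2(P_n)=\dim_2(C_n)=\lfloor\tfrac{2n+2}{5}\rfloor$ rests on exhibiting resolving sets of exactly that size for every $n\ge 4$ and both families, the proof is not complete as written: for instance, when $n\equiv 3\pmod 5$ the naive placement of a single extra landmark $v_{5m+2}$ makes $S(v_{5m+1})=S(v_{5m+3})=\{v_{5m+2}\}$, and one must instead take $v_{5m+1},v_{5m+3}$ (or $v_{5m+2},v_{5m+3}$) to reach size $2m+2$. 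These verifications are routine and your construction does succeed in every case, but they need to be written out (five residues for the path, and the cyclic closure for $C_n$) before the argument is a proof rather than a plan.
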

Two distinct vertices $u$ and $v$ are  {\it twins}
if $N(v)\backslash\{u\}=N(u)\backslash\{v\}$. 
It is easy to see that, if $u,v$ are twins in $G$ then for each $x\in V(G)\backslash\{u,v\}$, $d(x,u)=d(x,v)$ and therefore
$a_G(x,u)=a_G(x,v)$. Thus, we have the following lemma.
\begin{lemma}\label{twinadjacency}
If $u,v$ are twin vertices in a graph $G$, then every adjacency resolving set for $G$ contains at least one of the vertices $u$ and $v$.
\end{lemma}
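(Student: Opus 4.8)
The plan is to argue by contradiction, using only the definition of an adjacency resolving set together with the observation recorded just above the statement: if $u$ and $v$ are twins in $G$, then $a_G(x,u)=a_G(x,v)$ for every $x\in V(G)\setminus\{u,v\}$ (since $d(x,u)=d(x,v)$ for such $x$, and $a_G$ only sees distances truncated at $2$).

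So I would suppose, toward a contradiction, that $W$ is an adjacency resolving set for $G$ with $u\notin W$ and $v\notin W$. Then every $w_i\in W$ lies in $V(G)\setminus\{u,v\}$, so applying the observation with $x=w_i$ yields $a_G(u,w_i)=a_G(v,w_i)$ for all $i$. Read coordinatewise, this says $r_2(u|W)=r_2(v|W)$. Since $u\ne v$, this contradicts the assumption that the vectors $r_2(\cdot|W)$ are pairwise distinct. Hence no adjacency resolving set can avoid both $u$ and $v$, i.e., every adjacency resolving set contains at least one of them.

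I do not expect any real obstacle here: the substantive input is the twin identity $a_G(x,u)=a_G(x,v)$ for $x\notin\{u,v\}$, which the paper already establishes (and which holds regardless of whether $u$ and $v$ are adjacent, as one sees by rerouting a shortest $x$–$u$ path through $v$ via $N(u)\setminus\{v\}=N(v)\setminus\{u\}$), and the remainder is just an immediate unwinding of the definition of adjacency resolving set.
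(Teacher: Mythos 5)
Your argument is correct and is essentially the paper's own: the paper derives the lemma directly from the observation that twins satisfy $a_G(x,u)=a_G(x,v)$ for all $x\notin\{u,v\}$, so any set avoiding both $u$ and $v$ gives them identical adjacency representations. Your write-up just makes the contradiction explicit; no difference in substance.
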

\section{Graphs of order $n$ and adjacency dimension $n-2$}\label{n-2}

This section is aimed to characterize all $n$-vertex graphs with adjacency dimension $n-2$. To gain this goal, we first find a sharp
upper bound for adjacency dimension of graphs in terms of its order and diameter.
 \begin{lemma}\label{beta2<m-d-1+2D+4/5}
Let $G$ be a connected graph of order $n$ and diameter $D$. Then $$\dim_2(G)\leq n-D-1+\lfloor{{2D+4}\over 5}\rfloor.$$
\end{lemma}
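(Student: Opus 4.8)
The plan is to build, for a given connected $G$ of order $n$ and diameter $D$, an adjacency resolving set of size exactly $n-D-1+\lfloor(2D+4)/5\rfloor$. First I would fix vertices $x,y$ with $d_G(x,y)=D$ and a shortest $x$--$y$ path $P\colon v_0,v_1,\ldots,v_D$, which has precisely $D+1$ vertices. Next I would take an adjacency basis $W_0$ of the path graph on these $D+1$ vertices; by the last item of Lemma~\ref{adjacencyresults} when $D\ge 3$, and by $\dim_2(P_2)=\dim_2(P_3)=1$ from Lemma~\ref{chr=1,n-1} when $D\in\{1,2\}$, this basis satisfies $|W_0|=\lfloor(2(D+1)+2)/5\rfloor=\lfloor(2D+4)/5\rfloor$. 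Finally I would set $W=W_0\cup\bigl(V(G)\setminus V(P)\bigr)$. Because $W_0\subseteq V(P)$, the two pieces are disjoint, so $|W|=\lfloor(2D+4)/5\rfloor+\bigl(n-(D+1)\bigr)=n-D-1+\lfloor(2D+4)/5\rfloor$, and everything reduces to showing that $W$ is an adjacency resolving set for $G$.

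The step I would establish first, and the one that really makes things work, is that a diametral path is \emph{isometric} in $G$: for $0\le i<j\le D$ one has $d_G(v_i,v_j)=j-i$. This is immediate from the triangle inequality $D=d_G(v_0,v_D)\le d_G(v_0,v_i)+d_G(v_i,v_j)+d_G(v_j,v_D)$ combined with the bounds $d_G(v_0,v_i)\le i$, $d_G(v_i,v_j)\le j-i$, $d_G(v_j,v_D)\le D-j$ coming from subpaths of $P$; since the three bounds sum to $D$, each must be tight. Consequently, for vertices $v_i,v_k$ of $P$ the value $a_G(v_i,v_k)=\min\{2,d_G(v_i,v_k)\}$ coincides with the corresponding adjacency value computed inside the path graph on $V(P)$.

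Given this, the verification that $W$ resolves $G$ is short. The vertices not in $W$ are exactly those of $V(P)\setminus W_0$. Any such vertex $v$ is already separated from every $w\in W$ by the $w$-coordinate, which is $0$ in $r_2(w|W)$ and at least $1$ in $r_2(v|W)$. For two distinct vertices $v_i,v_j\in V(P)\setminus W_0$, the fact that $W_0$ adjacency-resolves the path on $V(P)$ yields some $v_k\in W_0$ distinguishing them there, and by the isometry above the same $v_k$ distinguishes them in $G$, i.e. $a_G(v_i,v_k)\ne a_G(v_j,v_k)$. Hence all vertices of $G$ have pairwise distinct adjacency representations with respect to $W$, so $\dim_2(G)\le|W|=n-D-1+\lfloor(2D+4)/5\rfloor$.

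I do not expect a genuine obstacle. The one point deserving care is that two on-path vertices must be told apart by landmarks of $W_0$ specifically, since an off-path landmark gives no a priori control on their adjacency values; that is exactly why I would lean on the isometry of $P$. The only other fussy detail is noting that the formula $\dim_2(P_{D+1})=\lfloor(2D+4)/5\rfloor$ for $D\in\{1,2\}$ must be taken from Lemma~\ref{chr=1,n-1} rather than from the $n\ge 4$ clause of Lemma~\ref{adjacencyresults}.
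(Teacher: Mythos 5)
Your proposal is correct and follows essentially the same route as the paper: take a diametral shortest path, adjoin an adjacency basis of that path to all off-path vertices, and check the count. The only difference is that you explicitly justify, via the isometry of a shortest path, why adjacency representations computed in $G$ restricted to the path agree with those computed in $P_{D+1}$ --- a point the paper's proof uses implicitly without comment.
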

\begin{proof}{
If $D=1$, then $G=K_n$ and $\dim_2(G)=n-1= n-D-1+\lfloor{{2D+4}\over 5}\rfloor$. Now we consider $D\geq2$.
Let $u,v\in V(G)$ be two vertices with $d(u,v)=D$ and $P_{D+1}=u,u_2,u_3,\ldots,u_{_D},v$ be a shortest path between $u$ and $v$.
If $D=2$, then $\dim_2(P_3)=1=\lfloor{{2D+4}\over 5}\rfloor$. For $D\geq3$, by
 Lemma~\ref{adjacencyresults}, $\dim_2(P_{D+1})=\lfloor{{2D+4}\over 5}\rfloor=t$.
 Let $B=\{v_1,v_2,\ldots,v_t\}\subseteq\{u,u_2,u_3,\ldots,u_D,v\}$
 be an adjacency basis of $P_{D+1}$. Consider the set $W=(V(G)-V(P_{D+1}))\cup B$. If there exist vertices $x,y\in V(G)-W\subseteq V(P_{D+1})$ with $r_2(x|W)=r_2(y|W)$, then  $r_2(x|B)=r_2(y|B)$ and this is a contradiction, because $B$ is an adjacency basis of $P_{D+1}$.
 Thus, $W$ is an adjacency resolving set for $G$ with cardinality $n-D-1+\lfloor{{2D+4}\over 5}\rfloor$.
}\end{proof}

It is clear that upper bound in Theorem~\ref{beta2<m-d-1+2D+4/5} is  tight for  $G=P_n$
and $G=K_n$.
In the next theorem, we construct an infinite family of graphs  of diameter $D$, order $n\geq D+1$ and adjacency dimension
$n-D-1+\lfloor{{2D+4}\over 5}\rfloor$. Therefore upper bound in
 Lemma~\ref{beta2<m-d-1+2D+4/5}  is sharp.
\begin{thm}
Let $k$ be a positive integer and $D\in\{5k,5k+2\}$. Then for each integer $n\geq D+1$ there exists a graph $G$
with $n$ vertices and diameter $D$, such that $\dim_2(G)=n-D-1+\lfloor{{2D+4}\over 5}\rfloor.$
\end{thm}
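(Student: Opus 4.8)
The plan is to produce an explicit ``path with pendants'' and read off its parameters. Put $m=n-D-1$ and $t=\lfloor(2D+4)/5\rfloor$; the goal is a graph $G$ of order $n$ and diameter $D$ with $\dim_2(G)=m+t$. I would take $G$ to be obtained from the path $P_{D+1}\colon u_0,u_1,\dots,u_D$ by adjoining $m$ new leaves $\ell_1,\dots,\ell_m$ at $u_1$ (each $\ell_j$ joined only to $u_1$); when $m=0$ this is just $P_{D+1}$. Then $G$ is connected of order $n$, and a one-line distance check gives ${\rm diam}(G)=D$: the pair $u_0,u_D$ realizes distance $D$, one has $d(\ell_j,u_D)=1+(D-1)=D$ and $d(\ell_i,u_0)=d(\ell_i,\ell_j)=2$, and every remaining pair is at distance at most $D$ (this uses only $D\ge2$).

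The upper bound is then immediate: $G$ is connected of order $n$ and diameter $D$, so Lemma~\ref{beta2<m-d-1+2D+4/5} gives $\dim_2(G)\le n-D-1+\lfloor(2D+4)/5\rfloor=m+t$. When $m=0$ we have $G=P_{D+1}$ and equality already holds by the value of $\dim_2(P_{D+1})$ in Lemma~\ref{adjacencyresults}, so I may assume $m\ge1$; the whole content is the matching lower bound $\dim_2(G)\ge m+t$.

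For the lower bound, let $W$ be an adjacency basis of $G$ and set $T=\{u_0,\ell_1,\dots,\ell_m\}$, so $V(G)\setminus T=\{u_1,\dots,u_D\}$. The members of $T$ are pairwise non-adjacent twins, each with neighbourhood $\{u_1\}$, so Lemma~\ref{twinadjacency} gives $|W\cap T|\in\{m,m+1\}$. Each vertex of $T$ lying in $W$ contributes $1$ to $r_2(u_1|W)$ and $2$ to $r_2(u_i|W)$ for all $i\ge2$, so those coordinates separate $u_1$ from $\{u_2,\dots,u_D\}$ and carry no further information; and a vertex of $T\setminus W$ has, with respect to any landmark in $\{u_1,\dots,u_D\}$, exactly the adjacency values of $u_0$. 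Hence $W\setminus T\subseteq\{u_1,\dots,u_D\}$ must already give pairwise distinct adjacency representations, computed inside $P_{D+1}$, to the vertices of $V(P_{D+1})\setminus\{u_1\}$ when $|W\cap T|=m$, and to $u_2,\dots,u_D$ when $|W\cap T|=m+1$. Let $\mu$, respectively $\mu'$, be the least size of a subset of $\{u_1,\dots,u_D\}$ that resolves $V(P_{D+1})\setminus\{u_1\}$, respectively $\{u_2,\dots,u_D\}$, inside $P_{D+1}$. Then $\dim_2(G)=|W|\ge m+\min\{\mu,\mu'+1\}$, so it suffices to prove $\mu\ge t$ and $\mu'\ge t-1$.

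The hard part is exactly these two inequalities, and this is where the hypothesis $D\in\{5k,5k+2\}$ is needed; I expect everything else to be routine bookkeeping. Both $\mu$ and $\mu'$ concern adjacency resolving sets of a path to which a single extra \emph{phantom} landmark may be attached at one end (and, for $\mu$, there is also the pendant $u_0$ hanging off $u_1$). The tool is the elementary description of adjacency resolving sets of a path $x_1,\dots,x_\ell$: a landmark identifies only itself, and two non-landmark vertices $x_a,x_b$ get equal representations exactly when the landmark set meets $\{x_{a-1},x_{a+1}\}$ and $\{x_{b-1},x_{b+1}\}$ in the same vertices, so each landmark ``controls'' only a bounded window of non-landmarks, forcing the densest resolving configurations to have density $2/5$ (this is the mechanism behind $\dim_2(P_\ell)=\lfloor(2\ell+2)/5\rfloor$ in Lemma~\ref{adjacencyresults}). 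A short case analysis of these windowed patterns should show that, for $D\in\{5k,5k+2\}$, the phantom landmark cannot pull the count below $\dim_2(P_{D+1})=t$ for the punctured set $V(P_{D+1})\setminus\{u_1\}$, nor below $t-1$ for $\{u_2,\dots,u_D\}$; equivalently, for these $D$ no adjacency basis of $P_{D+1}$ containing $u_1$ stays resolving on $V(P_{D+1})\setminus\{u_1\}$ once $u_1$ is removed. Granting $\mu\ge t$ and $\mu'\ge t-1$, the inequality above gives $\dim_2(G)\ge m+t$, which together with the upper bound yields $\dim_2(G)=n-D-1+\lfloor(2D+4)/5\rfloor$, as desired.
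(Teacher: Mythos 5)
Your construction (attach $m=n-D-1$ pendant leaves to $u_1$ on a path $u_0,\dots,u_D$) is different from the paper's, which instead attaches a \emph{clique} on $n-D-1$ vertices joined to $v_0,v_1,v_2$, so that $\{v_1,u_1,\dots,u_{n-D-1}\}$ becomes a twin class. Your diameter check, the upper bound via Lemma~\ref{beta2<m-d-1+2D+4/5}, and the twin/reduction step giving $\dim_2(G)\ge m+\min\{\mu,\mu'+1\}$ are all sound. But the proof is not complete: the two inequalities $\mu\ge t$ and $\mu'\ge t-1$ are exactly the content of the theorem, and you dispose of them with ``a short case analysis of these windowed patterns should show\dots''. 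Nothing in the paper gives you the adjacency dimension of a \emph{punctured} path (resolving $V(P_{D+1})\setminus\{u_1\}$, or $\{u_2,\dots,u_D\}$, with landmarks still allowed at $u_1$); Lemma~\ref{adjacencyresults} only gives $\dim_2(P_\ell)=\lfloor(2\ell+2)/5\rfloor$ for the full vertex set. Deleting one vertex from the set to be resolved while keeping it available as a landmark can in principle save a landmark, and ruling that out requires reproving the $2/5$-density lower bound for paths in this modified setting -- a genuinely nontrivial argument that you have only sketched. Crucially, you never identify where the hypothesis $D\in\{5k,5k+2\}$ enters; you merely assert that the case analysis will need it. A crude pattern count (each landmark on a path gives a $1$ to at most two vertices, at most $s-1$ vertices can receive two $1$'s, one vertex can be all $2$'s) only yields $\mu\gtrsim D/3$, which is weaker than the required $2D/5$, so the missing argument really is the hard part.

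For contrast, the paper's construction is engineered so that no such punctured-path lemma is needed: after the twin argument and one extra landmark to separate $\{u,v_0,v_2\}$, the residual problem is to resolve the clean sub-path $v_4,\dots,v_D$ of order $D-3$, whose adjacency dimension $\lfloor(2(D-3)+2)/5\rfloor$ is read off from Lemma~\ref{adjacencyresults}. This gives $\dim_2(G)=n-D+\lfloor(2D-4)/5\rfloor$ for \emph{every} $D$, and the hypothesis $D\in\{5k,5k+2\}$ appears only at the very end, as the condition under which $\lfloor(2D-4)/5\rfloor=\lfloor(2D+4)/5\rfloor-1$. To salvage your approach you would need to actually prove $\mu\ge t$ and $\mu'\ge t-1$ for $D\in\{5k,5k+2\}$ (they do hold in small cases, e.g.\ $D=5,7,10$), or switch to a construction whose residual resolving problem is a path on its full vertex set.
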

\begin{proof}{
Let $G$ be a graph with $V(G)=\{v_0,v_1,v_2,\ldots,v_{_D}\}\cup\{u_1,u_2,\ldots,u_{_{n-D-1}}\}$ and
$E(G)=\{v_iv_{i+1}|0\leq i\leq D-1\}\cup\{v_iu_j|0\leq i\leq2, 1\leq j\leq n-D-1\}\cup\{u_iu_j|1\leq i,j\leq n-D-1\}$, see Figure~\ref{figure}.
Clearly $G$ has $n$ vertices and $diam (G)=D$.
We prove that $\dim_2(G)=n-D-1+\lfloor{{2D+4}\over 5}\rfloor$.
\par
If $n=D+1$, then there is no any vertex $u_i$ in $G$ and $G=P_{_{D+1}}$. Thus, by Lemma~\ref{adjacencyresults}
$\dim_2(G)=\lfloor{{2(D+1)+2}\over 5}\rfloor=n-D-1+\lfloor{{2D+4}\over 5}\rfloor$.
\par
Now let $n\geq D+2$ and $B$ be an adjacency basis of $G$. In this case, the set $U=\{v_1,u_1,u_2,\ldots,u_{_{n-D-1}}\}$ is
a set of twin vertices. So, by Lemma~\ref{twinadjacency}, at most one of the vertices of $U$ can be not in $B$, say $U-B\subseteq\{u\}$.
Note that, $r_2(u|U-\{u\})=r_2(v_0|U-\{u\})=r_2(v_2|U-\{u\})=(1,1,\ldots,1)$ and $r_2(v_i|U-\{u\})=(2,2,\ldots,2)$, for $i\geq 3$.
Since none of vertices $v_3,v_4,\ldots,v_{_D}$ can  adjacency resolve vertices $v_0$ and $v_1$, to adjacency resolve $\{u,v_0,v_2\}$, we need at least one vertex from this set, say $y$. But, $r_2(v_i|(U\cup\{y\})-\{u\})=(2,2,\ldots,2)$, for $i\geq 4$.
Since $v_i$'s, $4\leq i\leq D$, form a path of order $D-3$, by Lemma~\ref{adjacencyresults} to adjacency resolve these vertices we need
$\lfloor{{2(D-3)+2}\over 5}\rfloor$ vertices from this set. Therefore
$$\dim_2(G)\geq |(U\cup\{y\})-\{u\}|+\lfloor{{2(D-3)+2}\over 5}\rfloor=n-D+\lfloor{{2(D-3)+2}\over 5}\rfloor=n-D+\lfloor{{2D-4}\over 5}\rfloor.$$
On the other hand, if $B_1$ is a basis of $P_{_{D-3}}=v_4,v_5,\ldots,v_{_D}$, then the set $\{v_2,u_1,u_2,\ldots,u_{_{n-D-1}}\}\cup B_1$ is an adjacency resolving set for $G$ of size $n-D+\lfloor{{2D-4}\over 5}\rfloor$. Therefore, $\dim_2(G)=n-D+\lfloor{{2D-4}\over 5}\rfloor$.
\par
In case $D=5k$, we have $$\lfloor{{2D-4}\over 5}\rfloor=\lfloor{{10k-4}\over 5}\rfloor=2k-1=\lfloor{{10k+4}\over 5}\rfloor-1=\lfloor{{2D+4}\over 5}\rfloor-1.$$
And in case $D=5k+2$,
$$\lfloor{{2D-4}\over 5}\rfloor=\lfloor{{10k+4-4}\over 5}\rfloor=2k=\lfloor{{10k+4+4}\over 5}\rfloor-1=\lfloor{{2D+4}\over 5}\rfloor-1.$$
Therefore in these two cases, $\dim_2(G)=n-D-1+\lfloor{{2D+4}\over 5}\rfloor$.
}\end{proof}
 For arbitrary $D$ and each $n\geq D+1$  the following question is propounded.
\begin{ques}
Is there a graph $G$ of diameter $D$ with $\dim_2(G)=n-D-1+\lfloor{{2D+4}\over 5}\rfloor$, for each $D$ and  $n\geq D+1$.
\end{ques}

 All graphs of order $n$ with metric dimension
 $n-2$ are characterized in Theorem~\ref{nn-2}.
Through the next theorem all graphs of order $n$ and adjacency dimension $n-2$ are characterised.
\begin{thm}\label{b2=n-2}
Let $G$ be a graph of order $n$. Then $\dim_2(G)=n-2$ if and only if $G$ or $\overline G$ is one of the graphs $P_4$,
 $K_{s,t}~(s,t\geq 1), K_s\vee\overline K_t~(s\geq 1,
t\geq 2)$, or $K_s\vee (K_t\cup K_1)~ (s,t\geq 1)$.
\end{thm}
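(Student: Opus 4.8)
The plan is to prove the two implications separately. In both I will use that at least one of $G$ and $\overline{G}$ is connected, together with $\dim_2(G)=\dim_2(\overline{G})$ from Lemma~\ref{adjacencyresults}, to reduce to the case where $G$ is connected; for connected $G$, Lemma~\ref{beta2<m-d-1+2D+4/5} then forces ${\rm diam}(G)\in\{1,2,3\}$, and the three values are handled in turn.

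For \emph{sufficiency} I would verify each listed graph. For $G=P_4$, Lemma~\ref{adjacencyresults} gives $\dim_2(P_4)=\lfloor 10/5\rfloor=2=n-2$, and $\overline{P_4}\cong P_4$. Each of the families $K_{s,t}$, $K_s\vee\overline{K}_t$, $K_s\vee(K_t\cup K_1)$ consists of connected graphs of diameter $2$ (for order $\geq 4$; the small cases reduce to $P_3=K_{1,2}$ and are immediate), so by Lemma~\ref{adjacencyresults} adjacency dimension equals metric dimension there, which is $n-2$ by Theorem~\ref{nn-2}; complements are covered by $\dim_2(\overline{G})=\dim_2(G)$.

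For \emph{necessity}, suppose $\dim_2(G)=n-2$; replacing $G$ by $\overline{G}$ if $G$ is disconnected, assume $G$ is connected and set $D={\rm diam}(G)$. Lemma~\ref{beta2<m-d-1+2D+4/5} gives $n-2\leq n-D-1+\lfloor(2D+4)/5\rfloor$, i.e., $D-1\leq\lfloor(2D+4)/5\rfloor$, which forces $D\leq 3$. If $D=1$ then $G=K_n$ and $\dim_2(G)=n-1\neq n-2$, impossible. If $D=2$ then $\dim_2(G)=\dim(G)=n-2$ by Lemma~\ref{adjacencyresults}, and Theorem~\ref{nn-2} (plus the trivial inspection of $n\leq 3$, where $G=P_3=K_{1,2}$) exhibits $G$ as one of the listed join graphs. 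The remaining case $D=3$ is where I expect the real work to lie.

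For $D=3$ I would show $G\cong P_4$, the unique connected graph of order $4$ with diameter $3$, so it suffices to exclude $n\geq 5$. Fix vertices $a,d$ with $d(a,d)=3$ and a geodesic $a,b,c,d$; the subgraph induced on $\{a,b,c,d\}$ is the path $a,b,c,d$. The lever is the elementary remark that $S\subseteq V(G)$ is an adjacency resolving set as soon as the vertices of $V(G)\backslash S$ have pairwise distinct adjacency representations with respect to $S$ (a vertex of $S$ is distinguished from every other vertex by itself); hence if a $3$-element set $T$ has each of its three pairs resolved by a vertex of $V(G)\backslash T$, then $\dim_2(G)\leq n-3$, contradicting $\dim_2(G)=n-2$. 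So it is enough to find such a $T$ inside $\{a,b,c,d\}$. A short case check shows that for each of the four $3$-element subsets of $\{a,b,c,d\}$, two of its three pairs are already resolved by the fourth vertex of $\{a,b,c,d\}$; thus that subset serves as the required $T$ unless its one remaining pair is unresolved by every vertex of $V(G)\backslash\{a,b,c,d\}$ as well. Running through the four cases, the simultaneous failure of all four subsets forces, for every $x\in V(G)\backslash\{a,b,c,d\}$, the equivalences $x\sim a\Leftrightarrow x\sim b$, $x\sim b\Leftrightarrow x\sim d$, $x\sim a\Leftrightarrow x\sim c$ and $x\sim c\Leftrightarrow x\sim d$; together these make $x$ adjacent to all of $a,b,c,d$ or to none of them, and ``all'' is impossible because $d(a,d)=3$. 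So every vertex outside $\{a,b,c,d\}$ is not adjacent to any of $a,b,c,d$, contradicting connectedness of $G$ when $n\geq 5$. Hence $n=4$ and $G=P_4$; and if $G$ was originally disconnected, the statement for $G$ follows from the one just proved for $\overline{G}$ (recall $\overline{P_4}\cong P_4$). The one genuinely delicate point is this last case analysis --- identifying, for each diametral $3$-subset, the pair that survives unresolved by the fourth vertex --- and it becomes routine bookkeeping once the ``$T$ versus $V(G)\backslash T$'' reformulation is set up.
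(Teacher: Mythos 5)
Your proof is correct, and its overall skeleton coincides with the paper's: reduce to connected $G$ via $\dim_2(G)=\dim_2(\overline G)$, use Lemma~\ref{beta2<m-d-1+2D+4/5} to force ${\rm diam}(G)\leq 3$, dispose of diameter at most $2$ through Lemma~\ref{adjacencyresults} and Theorem~\ref{nn-2}, and treat diameter $3$ by hand. Where you genuinely diverge is the diameter-$3$ case. The paper fixes a vertex $x$ with eccentricity $3$ and analyses the distance levels $N_1(x),N_2(x),N_3(x)$: a sequence of five separate contradiction arguments shows first that consecutive levels are completely joined and then that each level is a singleton, whence $G=P_4$. You instead fix a single geodesic $a,b,c,d$ and observe that for each $3$-subset $T$ of it the fourth vertex already separates two of the three pairs inside $T$, so the failure of all four sets $V(G)\setminus T$ to resolve forces every external vertex $x$ to satisfy $x\sim a\Leftrightarrow x\sim b\Leftrightarrow x\sim c\Leftrightarrow x\sim d$; ``adjacent to all'' contradicts $d(a,d)=3$, and ``adjacent to none'' contradicts connectedness once $n\geq 5$. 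I checked your four cases (the unresolved pairs are $\{a,b\}$, $\{b,d\}$, $\{a,c\}$, $\{c,d\}$ respectively) and they close the chain of equivalences as claimed. Your route is shorter and reaches the contradiction with connectedness in one stroke rather than shrinking the levels one at a time; the paper's route extracts a bit more structural information along the way but needs more bookkeeping. Both hinge on the same elementary lever, namely that $W$ is an adjacency resolving set as soon as the vertices outside $W$ are pairwise separated. One shared loose end, inherited from the theorem statement rather than from either proof: $K_{1,1}=K_2$ has $\dim_2=1=n-1\neq n-2$, so the family $K_{s,t}$ should exclude $s=t=1$; your sufficiency check, like the paper's, passes over this degenerate member silently.
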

\begin{proof}{
 If $G$ or $\overline G$ is one of the graphs $P_4$,
 $K_{s,t}~(s,t\geq 1), K_s\vee\overline K_t~(s\geq 1,
t\geq 2)$, or $K_s\vee (K_t\cup K_1)~ (s,t\geq 1)$, then it is clear that
$\dim_2(G)=n-2$.
\par
Conversely, we first prove for connected graphs. Let $G$ be a connected graph of order $n$ and $\dim_2(G)=n-2$. If $diam(G)\geq 4$, then by
Lemma~\ref{beta2<m-d-1+2D+4/5}, $$n-2\leq n-4-1+\lfloor{{8+4}\over 5}\rfloor=n-3.$$
This contradiction implies $diam(G)\leq3$. If $diam(G)\leq2$, then by Lemma~\ref{adjacencyresults}
$\dim_2(G)=\dim(G)$, thus by Theorem~\ref{n-2}
$G$  is one of the graphs,
 $K_{s,t}~(s,t\geq 1), K_s\vee\overline K_t~(s\geq 1,
t\geq 2)$, or $K_s\vee (K_t\cup K_1)~ (s,t\geq 1)$.
\par
Now let $diam(G)=3$ and $x,y\in V(G)$ such that $d(x,y)=3$. Suppose that $N_i(x)=\{t\in V(G)|d(x,t)=i\}$, $0\leq i\leq3$. If there exist non-adjacent vertices $b\in N_2(x)$ and $c\in N_3(x)$, then $c$ has a neighbour  $b'\neq b$ in $N_2(x)$ and
$$r_2(b|\{x,c\})=(2,2),~~~~ r_2(b'|\{x,c\})=(2,1),~~~~ r_2(e|\{x,c\})=(1,2),$$
where $e$ is an arbitrary vertex in $N_1(x)$.
Since these three adjacency representations are distinct,
 $V(G)-\{b,b',e\}$ is an adjacency resolving set for $G$. This contradiction implies that
all vertices of $N_3(x)$ are adjacent to all vertices in $N_2(x)$.
If there exist non-adjacent vertices $b\in N_1(x)$ and $c\in N_2(x)$, then $c$ has a neighbour  $b'\neq b$ in $N_1(x)$ and
$$r_2(b|\{x,c\})=(1,2),~~~~ r_2(b'|\{x,c\})=(1,1),~~~~ r_2(e|\{x,c\})=(2,1),$$
where $e$ is an arbitrary vertex in $N_3(x)$. Hence $V(G)-\{b,b',e\}$ is an adjacency resolving set for $G$. This contradiction implies that
all vertices of $N_2(x)$ are adjacent to all vertices in $N_1(x)$. If $u,v\in N_1(x)$ are two distinct vertices, then
$$r_2(y|\{x,u\})=(2,2),~~~~ r_2(z|\{x,u\})=(2,1),~~~~ r_2(v|\{x,u\})=(1,i),$$
where $z$ is an arbitrary vertex in $N_2(x)$ and $i\in\{1,2\}$. Hence $V(G)-\{y,z,v\}$ is an adjacency resolving set for $G$. This contradiction implies that $|N_1(x)|=1$, say $N_1(x)=\{w\}$. If $u,v\in N_2(x)$ are two distinct vertices, then
$$r_2(x|\{y,u\})=(2,2),~~~~ r_2(w|\{y,u\})=(2,1),~~~~ r_2(v|\{y,u\})=(1,i),$$
where  $i\in\{1,2\}$. Hence $V(G)-\{x,w,v\}$ is an adjacency resolving set for $G$. This contradiction implies that $|N_2(x)|=1$, say $N_2(x)=\{z\}$.
If $u,v\in N_3(x)$ are two distinct vertices, then
$$r_2(x|\{w,u\})=(1,2),~~~~ r_2(z|\{w,u\})=(1,1),~~~~ r_2(v|\{w,u\})=(2,i),$$
where $i\in\{1,2\}$. Hence $V(G)-\{x,z,v\}$ is an adjacency resolving set for $G$. This contradiction implies that $|N_3(x)|=1$. Therefore $G=P_4$.
\par
Since the complement of a disconnected graph is connected,
$G$ or $\overline G$ is one of the graphs $P_4$,
 $K_{s,t}~(s,t\geq 1), K_s\vee\overline K_t~(s\geq 1,
t\geq 2)$, or $K_s\vee (K_t\cup K_1)~ (s,t\geq 1)$.
}\end{proof}
\section{Graphs with adjacency dimension $2$}\label{b2=2}
In  this Section, a sharp upper bound for order of graphs with 
adjacency dimension $k$ is presented. This leads us to a lower bound for adjacency dimension of graphs
in terms of order of a graph. 
In fact if we consider $f(n)$ as the least positive integer $k$ such that $k+2^k\geq n(G)$,
then for each graph $G$ of order $n$, $\dim_2(G)\leq f(n)$.  All graphs that attain this bound are characterized. This bound and some known results lead us to a characterization of graphs  with adjacency dimension $2$.
\begin{lemma}\label{n<b2+2^b2}
Let $G$ be a graph of order $n$ and $\dim_2(G)=k$. Then $n\leq k+2^k$.
\end{lemma}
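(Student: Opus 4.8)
The plan is to count how many distinct adjacency representations are possible with respect to an adjacency basis $W$ of size $k$. Let $W=\{w_1,\ldots,w_k\}$ be an adjacency basis of $G$, so $\dim_2(G)=k=|W|$. Each vertex $v\in V(G)$ is assigned the vector $r_2(v\mid W)\in\{0,1,2\}^k$, and since $W$ is resolving, the map $v\mapsto r_2(v\mid W)$ is injective; hence $n\leq 3^k$ trivially. To sharpen this to $n\leq k+2^k$, I would separate the vertices of $G$ into those in $W$ and those in $V(G)\setminus W$.

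First I would observe that for $v\in V(G)\setminus W$ the coordinate $a_G(v,w_i)$ is never $0$ (that value occurs only when $v=w_i$), so $r_2(v\mid W)\in\{1,2\}^k$, giving at most $2^k$ possible vectors for the $n-k$ vertices outside $W$. Second, there are exactly $k$ vertices in $W$ itself. Adding these counts yields $n=|W|+|V(G)\setminus W|\leq k+2^k$, which is the claim. The only subtlety is that a vertex $w_i\in W$ could in principle share its representation-type with the outside count, but that cannot cause overcounting here because we are summing an exact count ($|W|=k$) with an upper bound on the rest; the inequality direction is safe regardless of whether some $w_i$ also has a $\{1,2\}^k$-type vector among its coordinates (indeed $r_2(w_i\mid W)$ has a $0$ in position $i$, so it is automatically distinct from every outside vector, but we do not even need this).

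There is essentially no obstacle: the argument is a clean double-counting once one records the single structural fact that $0$ appears in $r_2(v\mid W)$ only at the coordinate indexed by $v$ itself — a fact already noted in the paper's remark that ``$w\in W$ is the unique vertex of $G$ for which $d(w,w)=0$.'' If anything, the part requiring a line of care is making explicit that the $k$ vertices of $W$ need not be counted among the $2^k$, i.e. that we genuinely get the additive improvement over the naive $3^k$ bound rather than just $2^k$ or $3^k$; this follows simply because $V(G)$ is the disjoint union of $W$ and $V(G)\setminus W$. I would therefore present the proof as: fix an adjacency basis $W$ of size $k$; bound $|V(G)\setminus W|\leq 2^k$ via the injectivity of $r_2(\cdot\mid W)$ restricted to $V(G)\setminus W$ together with $r_2(v\mid W)\in\{1,2\}^k$ for such $v$; conclude $n\leq k+2^k$.
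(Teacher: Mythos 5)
Your proposal is correct and follows essentially the same argument as the paper: fix an adjacency basis $W$ of size $k$, note that vertices outside $W$ have representations in $\{1,2\}^k$ (since the entry $0$ occurs only for vertices of $W$), so $|V(G)\setminus W|\leq 2^k$ and hence $n\leq k+2^k$. Your extra remarks about why the additive decomposition is safe are fine but not needed beyond what the paper already records.
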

\begin{proof}{
Let $B$ be an adjacency basis of $G$. For each vertex $v\in V(G)$, $r_2(v|B)$ is a $k$-vector with entries $0,1,2$.
The members of $B$ are all vertices that their representation have entry $0$. The representation of  other vertices are constructed by $1$ and $2$.
Therefore, $G\setminus B$ has at most $2^k$ vertices. That is $n\leq k+2^k$.
}\end{proof}
In the next example we construct a family of graphs with $6=2+2^2$ vertices and adjacency dimension $2$.
\begin{exm}\label{k=2}
Let $n(G)=6$,  we introduce a construction for graphs with  adjacency dimension $2$. Let $H$ and $K$ be two arbitrary graphs
with $V(H)=\{a,b\}$ and $V(K)=\{c,d,e,f\}$. Consider graph $G$ with $V(G)=\{a,b,c,d,e,f\}$ and $E(G)=E(H)\cup E(K)\cup\{ac,ad,bd,be\}$.
By Lemma~\ref{n<b2+2^b2}, $\dim_2(G)\geq2$. On the other hand,
$$r_2(c|\{a,b\})=(1,2),~~~r_2(d|\{a,b\})=(1,1),~~~r_2(e|\{a,b\})=(2,1),~~~r_2(f|\{a,b\})=(2,2).$$
Hence, $\{a,b\}$ is an adjacency resolving set for $G$ of size $2$. Therefore $\dim_2(G)=2$.
\end{exm}
By extending the construction in Example~\ref{k=2},  we construct an infinite family of graphs with adjacency dimension $k$ and order $k+2^k$ in the next theorem. In fact in this theorem for each positive integer $k$, we find all graphs with adjacency dimension $k$ and order $k+2^k$.
\begin{thm}\label{thm n=b2+2^b2}
For each positive integer $k$, the family $\Omega_k$ of all graphs with adjacency dimension $k$ and order $k+2^k$
has $2^{{k\choose 2}+{2^k\choose 2}}$ members.
\end{thm}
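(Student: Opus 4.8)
The plan is to identify $\Omega_k$ with a rigid template family and then count the templates. Fix the vertex set $V=\{w_1,\ldots,w_k\}\cup\{v_\alpha : \alpha\in\{1,2\}^k\}$, which has $k+2^k$ elements, and for arbitrary graphs $H$ on $\{w_1,\ldots,w_k\}$ and $K$ on $\{v_\alpha : \alpha\in\{1,2\}^k\}$ let $G(H,K)$ be the graph with edge set $E(H)\cup E(K)\cup\{w_iv_\alpha : \alpha_i=1\}$; this generalises the construction of Example~\ref{k=2}. I would prove the theorem by establishing three things: (a) each $G(H,K)$ has adjacency dimension $k$, hence lies in $\Omega_k$; (b) conversely, every graph of order $k+2^k$ with adjacency dimension $k$ is, after a suitable relabelling, some $G(H,K)$; and (c) distinct pairs $(H,K)$ give distinct graphs $G(H,K)$. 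Since there are $2^{{k\choose 2}}$ choices for $H$ and $2^{{2^k\choose 2}}$ for $K$, these three facts give $|\Omega_k|=2^{{k\choose 2}}\cdot 2^{{2^k\choose 2}}=2^{{k\choose 2}+{2^k\choose 2}}$.

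For (a): in $G(H,K)$ the set $B=\{w_1,\ldots,w_k\}$ adjacency-resolves $V$, because $r_2(v_\alpha|B)=\alpha$ depends only on the prescribed cross-edges and not on $H$ or $K$, so the $2^k$ vertices $v_\alpha$ receive the $2^k$ pairwise distinct vectors of $\{1,2\}^k$, while each $w_i$ is the unique vertex whose representation has a $0$ in coordinate $i$. Hence $\dim_2(G(H,K))\leq k$. As $G(H,K)$ has $k+2^k$ vertices, Lemma~\ref{n<b2+2^b2} gives $k+2^k\leq\dim_2(G(H,K))+2^{\dim_2(G(H,K))}$, and since $t\mapsto t+2^t$ is strictly increasing this forces $\dim_2(G(H,K))\geq k$; so $\dim_2(G(H,K))=k$.

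The structural forcing in (b) is the heart of the argument. Take $G\in\Omega_k$ and an adjacency basis $B=\{w_1,\ldots,w_k\}$. Each $w_i$ has a $0$ in coordinate $i$ of $r_2(w_i|B)$, while every one of the remaining $n-k=2^k$ vertices has its representation in $\{1,2\}^k$, and all of these representations are distinct; since $|\{1,2\}^k|=2^k$, the representation map is a bijection from $V(G)\setminus B$ onto $\{1,2\}^k$. Renaming the non-basis vertices $v_\alpha$ according to this bijection forces $w_i\sim v_\alpha$ exactly when $\alpha_i=1$, so $G=G(G[B],G[V(G)\setminus B])$, which is (b); and (c) is immediate because $H$ and $K$ are recovered from $G(H,K)$ as its induced subgraphs on $\{w_1,\ldots,w_k\}$ and on $\{v_\alpha\}$. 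The one point requiring care is the bookkeeping about what is being counted: $\Omega_k$ is to be read as the family of template graphs on the labelled set $V$ — equivalently, the complete enumeration of the graphs with the stated parameters realised by the construction, possibly with isomorphic repetitions, since different $(H,K)$ may yield isomorphic graphs while the clean exponent ${k\choose 2}+{2^k\choose 2}$ counts template instances. Beyond that the argument is routine once the bijection in (b) is established, and that bijection is precisely where the extremal hypothesis $n=k+2^k$ enters.
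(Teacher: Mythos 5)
Your proposal is correct and follows essentially the same route as the paper: exhibit the template family $G(H,K)$, show $\{w_1,\ldots,w_k\}$ is an adjacency basis while Lemma~\ref{n<b2+2^b2} forces the lower bound, and then use the pigeonhole bijection onto $\{1,2\}^k$ to show every extremal graph arises this way, with the count coming from the free choices of $H$ and $K$. Your explicit remark that the exponent counts labelled template instances rather than isomorphism classes is a point the paper leaves implicit, but it does not change the argument.
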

\begin{proof}{
Let $k$ be a positive integer and $G$ be a graph with vertex set $V(G)=\{v_1,v_2,\ldots ,v_k\}\cup \{u_1,u_2,\ldots, u_{_{2^k}}\}$, where $u_i$'s are
$k$-vectors with entries $1$ and $2$. A vertex $v_i$ is adjacent to $u_j$ if the $i$th entry of $u_j$ is $1$. The adjacency of two members of $V=\{v_1,v_2,\ldots ,v_k\}$ is arbitrary. Also the adjacency of two members of $U=\{u_1,u_2,\ldots, u_{_{2^k}}\}$ is arbitrary. We define the family $\Omega_k$ all these graphs. Since the adjacency of two members of $V$ and two members of $U$ are arbitrary, $|\Omega_k|=2^{{k\choose 2}+{2^k\choose 2}}$. Now we prove that the family $\Omega_k$ consists of all graphs with adjacency dimension $k$ and order $k+2^k$.
\par Let $G$ be a graph in $\Omega_k$, by Lemma~\ref{n<b2+2^b2}, $\dim_2(G)\geq k$. On the other hand, for each $u_i$, $1\leq i\leq 2^k$, the adjacency representation of $u_i$ with respect to $V$ is its corresponding $k$-vector, that is the adjacency representation of all vertices  of $U$ with respect to $V$ are deferent. Also for each $i$, $1\leq i\leq k$, $v_i$ is the unique vertex of $G$ with $0$ in $i$th entry in $r_2(v_i|V)$, thus $V$ is an adjacency resolving set for $G$ of size $k$. Therefore the adjacency dimension of all members of $\Omega_k$ is $k$.
\par Now we need to prove that each graph with adjacency dimension $k$ and order $k+2^k$ belongs to $\Omega_k$. Let $G$ be a graph with this properties, $V$ be its adjacency basis and $U=V(G)\setminus V$. Hence $|V|=k$ and there are $2^k$ distinct adjacency representations with respect to $V$ for vertices in $U$. Since $|U|=2^k$, the set of adjacency representations of vertices in $U$  with respect to $V$
is the set of all $k$-vectors with entries $1$ and $2$.  That is, $U$ is correspond to the set of all $k$-vectors with entries $1$ and $2$. Also a vertex $v_i\in V$ is adjacent to a vertex $u_j\in U$ if and only if the $i$th entry of its adjacency representation is $1$. Therefore $G\in \Omega_k$.
}\end{proof}
In the remaining of this section we find all graphs with adjacency dimension $2$.
By Lemma~\ref{n<b2+2^b2}, if $\dim_2(G)=2$, then $n(G)\leq6$.
Thus to study graphs with adjacency dimension $2$, it is sufficient to consider all graphs with at most $6$ vertices. In the next, all graphs $G$ with order at most $6$ are studied and in each case all graphs with adjacency dimension $2$ are detected.
\par Case 1. $n(G)\leq2$: in this case, all graphs has adjacency dimension $1$.
\par
Case 2. $n(G)=3$, in this case, $n-1=2$ and by Lemma~\ref{chr=1,n-1}, $\dim_2(G)=2$ if and only if $G=K_3$ or $G=\overline K_3$.
\par
Case 3.  $n(G)=4$, in this case by Lemma~\ref{n<b2+2^b2}, $\dim_2(G)\geq2$. Clearly
$\dim_2(G)\leq n(G)-1=3$. But $\dim_2(G)=3$ if and only if $G=K_4$ or $G=\overline K_4$. Therefore $\dim_2(G)=2$ if and only if $G$ is
not $K_4$ or $\overline K_4$.
\par
Case 4. $n(G)=5$, Lemma~\ref{n<b2+2^b2} implies that $\dim_2(G)\geq2$, clearly
$\dim_2(G)\leq n(G)-1=4$.
 Also, by Lemma~\ref{chr=1,n-1}  $\dim_2(G)=4$ if and only if $G=K_5$ or $G=\overline K_5$.
Moreover, Theorem~\ref{b2=n-2} implies $\dim_2(G)=3$ if and only if $G$ or $\overline G$ is one of the graphs,
$K_{1,4},K_{2,3},K_3\vee\overline{K}_2,K_2\vee\overline{K}_3,K_1\vee(K_3\cup K_1)$ or $K_2\vee(K_2\cup K_1)$.
Therefore, $\dim_2(G)=2$ if and only if  $G$ or $\overline{G}$ is not any of the following graphs.
$$K_5,K_{1,4},K_{2,3},K_3\vee\overline{K}_2,K_2\vee\overline{K}_3,K_1\vee(K_3\cup K_1),K_2\vee(K_2\cup K_1).$$
\par
Case 5. $n(G)=6$, in this case the family of all graphs is $\Omega_2$. In fact these graphs are described in Example~\ref{k=2}.
\par Therefore we found all graphs with adjacency dimension $2$.

\end{document}